\hfill \footnotesize {\rm M. Eshaghi Gordji,  S. Abbaszadeh and
Th. M. Rassias } \hfill
\hfill \footnotesize {\rm On the Mazur--Ulam theorem in fuzzy n-normed strictly convex spaces }  \hfill$~$}
\begin{document}
\thispagestyle{empty}
 \setcounter{page}{1}

\begin{center}
{\large\bf On the Mazur--Ulam theorem in fuzzy n--normed strictly
convex spaces\vskip.25in

{\bf M. Eshaghi Gordji} \\[2mm]

{\footnotesize Department of Mathematics,
Semnan University,\\ P. O. Box 35195-363, Semnan, Iran\\
[-1mm] Tel:{\tt 0098-231-4459905} \\
[-1mm] Fax:{\tt 0098-231-3354082} \\
[-1mm] e-mail: {\tt madjid.eshaghi@gmail.com}}

{\bf S. Abbaszadeh } \\[2mm]

{\footnotesize Department of Mathematics,
Semnan University,\\ P. O. Box 35195-363, Semnan, Iran\\
[-1mm] e-mail: {\tt s.abbaszadeh.math@gmail.com }}

{\bf \bf Th. M. Rassias} \\[2mm]
{\footnotesize Department of Mathematics, National Technical
University of Athens,\\
 Zografou, Campus 15780 Athens, Greece\\
[-1mm]e-mail: {\tt trassias@math.ntua.gr}} }
\end{center}
\vskip 5mm

\noindent{\footnotesize{\bf Abstract.} In this paper, we  generalize
the Mazur--Ulam theorem in the fuzzy real n-normed strictly convex spaces.\\

{\it Mathematics Subject Classification.} Primary 46S40; Secondary
39B52, 39B82, 26E50,
 46S50.\\

{\it Key words and phrases:}  Fuzzy n-normed space; Mazur--Ulam
theorem; Fuzzy n-isometry.

  \newtheorem{df}{Definition}[section]
  \newtheorem{rk}[df]{Remark}
   \newtheorem{lem}[df]{Lemma}
   \newtheorem{thm}[df]{Theorem}
   \newtheorem{pro}[df]{Proposition}
   \newtheorem{cor}[df]{Corollary}
   \newtheorem{ex}[df]{Example}

 \setcounter{section}{0}
 \numberwithin{equation}{section}

\vskip .2in

\begin{center}
\section{Introduction}
\end{center}

The theory of isometric began in the classical paper \cite{Maz} by
S. Mazur and S. Ulam who proved that every isometry of a real normed
vector space onto another real normed vector space is a linear
mapping up to translation. The property is not true for normed
complex vector space(for instance consider the conjugation on $\Bbb
C$). The hypothesis of surjectivity is essential. Without this
assumption, Baker \cite{Bak} proved that every isometry from a
normed real space into a strictly convex normed real space is linear
up to translation. A number of the mathematicians have had dealt
with
the Mazur--Ulam theorem.\\
 The main theme of this paper is the proof
of the Mazur--Ulam theorem in a fuzzy n-normed strictly convex
space.\\
 In 1984, Katsaras \cite{Kat} defined a fuzzy norm on a linear
space and at the same year Wu and Fang \cite{Co} also introduced a
notion of fuzzy normed space and gave the generalization of the
Kolmogoroff normalized theorem for  fuzzy topological linear space.
In \cite{Bi}, Biswas defined and studied fuzzy inner product spaces
in linear space. In 1994, Cheng and Mordeson introduced a definition
of fuzzy norm on a linear space in such a manner that the
corresponding induced fuzzy metric is of Kramosil and Michalek type
\cite{Kra}. In 2003, Bag and Samanta \cite{Bag} modified the
definition of Cheng and Mordeson \cite{Che} by removing a regular
condition. They also established a decomposition theorem of a fuzzy
norm into a family of crisp norms and investigated some properties
of fuzzy norms (see \cite{Bag}).\\
 In \cite{Ga,Gaa}, G\"{a}hler introduced a new approach for a theory of 2-norm   and n-norm on a linear space. In
\cite{Gun}, Hendra Gunawan and Mashadi gave a simple way to derive
an (n-1)-norm from the n-norm and realized that any n-normed space
is an (n-1)-normed space. Al. Narayanan and S. Vijayabalaji have
introduced the notion of fuzzy n-normed linear space in \cite{Nar}.
Also, S. Vijayabalaji, N. Thillaigovindan and Y. B. Jun, extended
n-normed linear spaces to fuzzy n-normed linear spaces in
\cite{Vij}.  We  mention here the papers and monographs
\cite{BAN,BO,DA,J-R,K-R,L,P-R1,P-R2,R,R-S,R-SH} and \cite{Z}
concerning the isometries on metric spaces.

\section{ Preliminaries }
In this section, we state some essential definitions and results
which will be needed in the sequel.
\begin{df}
Let X be a real linear space. A function $N : X \times \Bbb R
\longrightarrow [0,1]$ (the so--called fuzzy subset) is said to be a
fuzzy norm on X, if for all $x, y \in X$ and all $s, t \in \Bbb
R$:\\
$(N_1)~~N(x,t)=0$ for $t\leq 0;$\\
$(N_2)~~x=0$ if and only if $N(x,t)=1$ for all $t>0;$\\
$(N_3)~~N(tx,s)=N(x,\frac{s}{|t|})$ if $t\neq0$;\\
$(N_4)~~N(x+y,t+ s)\geq min \{N(x,t), N(y,s)\};$\\
$(N_5)~~N(x,.)$ is non--decreasing function on $\Bbb R$ and $\lim
_{t \to \infty} N(x,t)=1;$\\
$(N_6)~~$ For $x\neq 0,$ $N(x,.)$ is (upper semi) continuous on
$\Bbb R.$\\
\end{df}
The pair $(X,N)$ is called a fuzzy normed linear space. One may
regard $N(x,t)$ as the truth value of the statement "the norm of $x$
is less than or equal to the real number $t$".\\
\begin{df}
Let $n\in \Bbb N$(natural numbers) and let $X$ be a real vector
space of dimension $d\geq n$. A real valued function
$\|\bullet,...,\bullet \|$ on $X\times ...\times X$ satisfying the
following four
properties:\\
$(1) \|x_1 ,..., x_n \|=0$, if and only if $x_1 ,..., x_n$ are
linearly dependent;\\
$(2) \|x_1 ,..., x_n \|$ is invariant under any permutation;\\
$(3) \|x_1 ,..., \alpha x_n \|=|\alpha|\|x_1 ,..., x_n \|$, for any
$\alpha \in \Bbb R$;\\
$(4) \|x_1 ,..., x_{n-1} , y+z\|\leq \|x_1 ,..., x_{n-1} , y
\|+\|x_1
,..., x_{n-1} , z\|$;\\
is called an n-norm on $X$ and the pair $(X,\|\bullet,...,\bullet
\|)$, is called an n-normed space.\\
\end{df}
\begin{df}
Let X be a real linear space over a real field $F$. A fuzzy subset
$N$  of $X^n \times \Bbb R$ ($\Bbb R$ is the set of real numbers) is
called the fuzzy n-normed on $X$, if and only if for every $x_1,
..., x_n, x^{'}_n\in X$:\\
$(nN_1)~~$ For all  $t\in \Bbb R$ with $t\leq 0;$, $N(x_1, ..., x_n, t)=0$;\\
$(nN_2)~~$ For all $t\in \Bbb R$ with $t>0$, $N(x_1, ..., x_n,
t)=1$, if and only if $x_, ..., x_n$ are linearly dependent;\\
$(nN_3)~~N(x_1, ..., x_n, t)$ is invariant under any permutation of
$x_1, ..., x_n$;\\
$(nN_4)~~$ For all  $t\in \Bbb R$ with $t>0$, $N(x_1, ..., cx_n,
t)=N(x_1, ..., x_n, \frac{t}{|c|})$, if $c\neq 0$, $c\in
F$(field);\\
$(nN_5)~~$ For all  $s,t\in \Bbb R$, $N(x_1, ...,
x_n+x^{'}_n,s+t)\geq min \{N(x_1, ..., x_n, t),N(x_1, ..., x^{'}_n,
s)\};$\\
$(N_6)~~$ $N(x_1, ..., x_n, t)$, is left continuous and
non--decreasing function of $t\in \Bbb R$ and
\begin{align*}
\lim _{t \to \infty} N(x_1, ..., x_n, t)=1;  \hspace{9.0cm}
\end{align*}
\end{df}
In this case, the pair  $(X,N)$ is called a fuzzy n-normed linear space.\\
\begin{ex}
Let $(X,\|\bullet, ..., \bullet\|)$ be an n-normed space. We define $$N(x_1, ..., x_n, t):=\left\{%
\begin{array}{ll}
   \frac{t}{t+\|x_1, ..., x_n\|}, & when~~~ t\in \Bbb R ~~~with~~~ t> 0~~~, (x_1, ..., x_n) \in
   X\times ...\times X,\\
    0,~~~ & when~~~ t\leq0, \\
\end{array}%
\right.    $$ Then it is easy to show that $(X,N)$ is a fuzzy
n-normed linear space.
\end{ex}
\begin{df}
A fuzzy n-normed space is called {\it strictly convex}, if and only
if for every $x_1, ..., x_n, x^{'}_n\in X$ and $s,t\in \Bbb R$,
$N(x_1, ..., x_n+x^{'}_n,s+t)= min \{N(x_1, ..., x_n, t),N(x_1, ...,
x^{'}_n, s)\}$ and for any $z_1, ..., z_n\in X$, $N(x_1, ..., x_n,
t)= N(z_1, ..., z_n, s)$ implies that $x_1=z_1, ..., x_n=z_n$ and
$s=t$.
\end{df}
\begin{df}
Let $(X,N)$ and $(Y,N)$ be two fuzzy n-normed spaces. We call
$f:(X,N) \to (Y,N)$ a {\it fuzzy n-isometry}, if and only if
\begin{align*}
N(x_1- x_0 , ..., x_n - x_0 , t)=N(f(x_1)-f(x_0), ...,
f(x_n)-f(x_0), t),  \hspace{7.0cm}
\end{align*}
 for all $x_0, x_1, ..., x_n\in X$ and all $t>0.$
\end{df}
\begin{df}
Let $X$ be a real linear space and $x,y,z$ mutually disjoint
elements of $X.$ Then $x,y$ and $z$ are said to be 2-{\it collinear}
if $y-z=t(x-z)$, for some real number $t.$
\end{df}
\section{ Mazur--Ulam problem }
 In this section we  prove
the Mazur--Ulam theorem in the fuzzy real n-normed strictly convex
spaces. From now on, let $(X, N)$ and $(Y, N)$ be two fuzzy n-normed
strictly convex spaces and $f:(X,N) \to (Y,N)$ be a function.
\begin{lem}
For each $x_1, ..., x_n, x^{'}_n\in X$ and $t\in \Bbb R$,\\
$(i)~~$ $N(x_1, ..., x_n- x^{'}_n, t)= N(x_1, ..., x^{'}_n- x_n,
t)$;\\
$(ii)~~$ $N(x_1, ..., x_i, ..., x_j, ..., x_n, t)= N(x_1, ..., x_i +
\alpha x_j, ..., x_j, ..., x_n, t)$, for all $\alpha \in \Bbb R$;\\
\end{lem}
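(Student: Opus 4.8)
The plan is to handle the two parts separately: part (i) will fall out directly from the homogeneity axiom, while part (ii) requires the additivity/strict-convexity equality together with a left-continuity limit.

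For (i), I would write $x'_n - x_n = (-1)(x_n - x'_n)$ and apply $(nN_4)$ with the scalar $c=-1$. Since $|c|=1$, this gives $N(x_1,\dots,x'_n-x_n,t)=N(x_1,\dots,x_n-x'_n,t/|-1|)=N(x_1,\dots,x_n-x'_n,t)$ for every $t>0$, and for $t\le 0$ both sides vanish by $(nN_1)$. So (i) is a one-line consequence of homogeneity.

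For (ii), the idea is to split the $i$-th slot $x_i+\alpha x_j$ by additivity. First I would use the permutation invariance $(nN_3)$ to move the $i$-th coordinate into the last position, so that the strict-convexity equality of the strictly convex space applies. Splitting $x_i+\alpha x_j$ as the sum of $x_i$ and $\alpha x_j$ then yields, for all $s,t$,
\[
N(\dots,x_i+\alpha x_j,\dots,t+s)=\min\{N(\dots,x_i,\dots,t),\,N(\dots,\alpha x_j,\dots,s)\}.
\]
The key observation is that in the second term the coordinate $\alpha x_j$ stands alongside the coordinate $x_j$, so these two vectors are linearly dependent (and if $\alpha=0$ the coordinate is the zero vector, which is again a dependent family); hence by $(nN_2)$ this term equals $1$ whenever $s>0$. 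Therefore $N(\dots,x_i+\alpha x_j,\dots,t+s)=N(\dots,x_i,\dots,t)$ for every $s>0$.

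The remaining and most delicate step is to discard the surplus $s$. Fixing the total $T$ and setting $t=T-s$, the previous identity reads $N(\dots,x_i+\alpha x_j,\dots,T)=N(\dots,x_i,\dots,T-s)$ for all $s>0$; letting $s\to 0^{+}$ and invoking the left continuity of $t\mapsto N(\cdots,t)$ from $(N_6)$, the right-hand side tends to $N(\dots,x_i,\dots,T)$, which is the claimed equality. I expect this limit---correctly matching the one-sided continuity hypothesis with the direction in which $s$ is sent to $0$---to be the main obstacle, since everything else is bookkeeping of the axioms. (One can avoid the strict-convexity equality entirely and run the same limit twice using only the inequality $(nN_5)$: once for $x_i+\alpha x_j$ and once for the decomposition $x_i=(x_i+\alpha x_j)+(-\alpha)x_j$, obtaining the two opposite inequalities.)
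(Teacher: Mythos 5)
Your proof of part (i) is exactly the paper's: both rewrite $x_n-x'_n=(-1)(x'_n-x_n)$ and apply $(nN_4)$ with $c=-1$. For part (ii) you take a genuinely different, and in fact cleaner, route. The paper introduces the auxiliary vector $z=\frac{1}{\alpha}x_i+x_j$ and proves the two inequalities $N(\dots,x_i,\dots,t)\le N(\dots,x_i+\alpha x_j,\dots,t)$ and $N(\dots,x_i+\alpha x_j,\dots,t)\le N(\dots,x_i,\dots,t)$ separately: each time it first passes from $t$ to $t+s$ using the monotonicity clause of $(nN_6)$ and then collapses the surplus $s$ with one application of the strict-convexity equality, the discarded term of the minimum being $1$ by $(nN_2)$ because its arguments contain $x_j$ twice up to a scalar. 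So the paper sandwiches and never takes a limit, using monotonicity of $N(\cdots,t)$ in $t$ where you use left continuity; both properties sit in the same axiom, so either is legitimate. Your version applies the strict-convexity equality once, to the decomposition $x_i+\alpha x_j=x_i+(\alpha x_j)$, and then sends $s\to 0^{+}$; the direction of the limit does match the left-continuity hypothesis ($T-s\to T$ from below), so the step you flagged as delicate is sound. Your argument is shorter, handles $\alpha=0$ explicitly (the paper's $z=\frac{1}{\alpha}x_i+x_j$ silently assumes $\alpha\ne 0$), and, as your closing parenthetical observes, can be run with only the inequality $(nN_5)$ applied to the two decompositions $x_i+\alpha x_j=x_i+(\alpha x_j)$ and $x_i=(x_i+\alpha x_j)+(-\alpha)x_j$, which removes the strict-convexity hypothesis entirely, whereas the paper's proof invokes the strict-convexity equality twice. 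Both arguments are correct relative to the stated axioms.
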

\begin{proof}
\begin{align*}
N(x_1, ..., x_n- x^{'}_n, t)= N(x_1, ..., (-1)( x^{'}_n -x_n), t)&=
N(x_1, ..., x^{'}_n -x_n, \frac{t}{|-1|})\\&= N(x_1, ..., x^{'}_n-
x_n, t). \hspace{4.0cm}
\end{align*}
To prove $(ii)$, assume that $s,t\in \Bbb R$ and $s,t>0$ and $z=
\frac{1}{\alpha} x_i + x_j$. By using $(i)$ and $(nN_2)$ and
$(nN_6)$, we have
\begin{align*}
N&(x_1, ..., x_i, ..., x_j, ..., x_n, t) \leq N(x_1, ..., x_i, ...,
x_j, ..., x_n, t+s)\\&= N(x_1, ..., \alpha (z- x_j), ..., x_j, ...,
x_n, t+s)\\&= N(x_1, ..., z- x_j, ..., x_j, ..., x_n,
\frac{t+s}{|\alpha|})\\&= min \{N(x_1, ..., z, ..., x_j, ..., x_n,
\frac{t}{|\alpha|}), N(x_1, ..., x_j, ..., x_j, ..., x_n,
\frac{s}{|\alpha|})\}\\&= N(x_1, ..., z, ..., x_j, ..., x_n,
\frac{t}{|\alpha|})\\&= N(x_1, ..., \alpha z, ..., x_j, ..., x_n,
t)\\&= N(x_1, ..., x_i + \alpha x_j, ..., x_j, ..., x_n, t)\\&\leq
N(x_1, ..., x_i + \alpha x_j, ..., x_j, ..., x_n, t+s)\\&= min
\{N(x_1, ..., x_i, ..., x_j, ..., x_n, t), N(x_1, ..., \alpha x_j,
..., x_j, ..., x_n, s)\}\\&= N(x_1, ..., x_i, ..., x_j, ..., x_n, t)
\hspace{7.5cm}
\end{align*}
Hence, $N(x_1, ..., x_i, ..., x_j, ..., x_n, t)= N(x_1, ..., x_i +
\alpha x_j, ..., x_j, ..., x_n, t)$, for all $\alpha \in \Bbb R$.
\end{proof}

\begin{lem} Let $x_0 , x_1 \in X$ be arbitrary and $t>0.$ Then $u= \frac{x_0 +
x_1}{2}$ is the unique element of $X$ satisfying
\begin{align*}
N&(x_1 - u, x_1 - x_n, x_2 - x_n, ..., x_{n-1} - x_n, t)\\&= N(x_0 -
x_n, x_0 - u, x_2 - x_n, ..., x_{n-1} - x_n, t)\\&= N(x_0 - x_n, x_1
- x_n, ..., x_{n-1} - x_n, 2t)  \hspace{7.0cm}
\end{align*}
for every $x_2 , ..., x_n\in X$ and $u$, $x_0$ and $x_1$ are
2--colinear.
\end{lem}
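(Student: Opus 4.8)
The plan is to prove the lemma in two stages: first verify that $u=\frac{x_0+x_1}{2}$ satisfies the three-fold equality together with the collinearity, and then show that no other $2$-collinear point can. For the verification, I would substitute $u$ and note that $x_1-u=\frac12(x_1-x_0)$ and $x_0-u=\frac12(x_0-x_1)$. In the first expression, applying $(nN_4)$ with $c=\frac12$ to the leading slot turns $N(\frac12(x_1-x_0),x_1-x_n,\dots,t)$ into $N(x_1-x_0,x_1-x_n,\dots,2t)$; then part (ii) of the preceding lemma (adding $-1$ times the second slot to the first) replaces $x_1-x_0$ by $x_n-x_0$, and $(nN_4)$ with $c=-1$ (together with the permutation invariance $(nN_3)$) turns $x_n-x_0$ into $x_0-x_n$, yielding exactly $N(x_0-x_n,x_1-x_n,\dots,2t)$. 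The second expression is handled symmetrically: $(nN_4)$ pulls the $\frac12$ out of the second slot to give $N(x_0-x_n,x_0-x_1,\dots,2t)$, and one use of part (ii) of the preceding lemma (adding the first slot to the second) converts $x_0-x_1$ into $x_1-x_n$. Collinearity is immediate, since $x_1-x_0=2(u-x_0)$ exhibits $u,x_0,x_1$ as $2$-collinear.

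For uniqueness, let $v$ be any point that is $2$-collinear with $x_0,x_1$ and satisfies the same two equalities (one may assume $x_0\neq x_1$, the coincident case being trivial). Collinearity lets me write $v=(1-s)x_0+sx_1$ for some $s\in\Bbb R$, so that $x_1-v=(1-s)(x_1-x_0)$ and $x_0-v=-s(x_1-x_0)$. I would first fix $x_2,\dots,x_n$ so that $x_0-x_n,x_1-x_n,x_2-x_n,\dots,x_{n-1}-x_n$ are linearly independent, which is possible because $\dim X=d\ge n$ and $x_0\neq x_1$; then the common right-hand side $N(x_0-x_n,x_1-x_n,\dots,2t)$ is strictly less than $1$. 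This at once rules out $s=1$ and $s=0$, since either would make $x_1-v$ (respectively $x_0-v$) the zero vector, forcing the corresponding left-hand side to equal $1$ by $(nN_2)$, a contradiction.

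For $s\notin\{0,1\}$, I would apply $(nN_4)$ with $c=1-s$ (respectively $c=-s$) to extract the scalar, followed by the same slot manipulations as in the verification step, rewriting the two hypotheses as $N(x_0-x_n,x_1-x_n,\dots,\frac{t}{|1-s|})=N(x_0-x_n,x_1-x_n,\dots,2t)$ and $N(x_0-x_n,x_1-x_n,\dots,\frac{t}{|s|})=N(x_0-x_n,x_1-x_n,\dots,2t)$. In each equation the two $n$-tuples of vectors coincide and are linearly independent, so the strict convexity of the space forces equality of the scalar arguments: $|1-s|=\frac12$ and $|s|=\frac12$. The first gives $s\in\{\frac12,\frac32\}$ and the second $s\in\{\frac12,-\frac12\}$, whose only common value is $s=\frac12$; hence $v=\frac{x_0+x_1}{2}=u$. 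The step I expect to be the main obstacle is precisely this last one: arranging both hypotheses into the form ``identical vectors, differing only in the scalar'' so that strict convexity applies, and choosing $x_2,\dots,x_n$ to make the tuple independent while simultaneously disposing of the degenerate scalars $s=0$ and $s=1$.
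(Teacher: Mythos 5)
Your proposal follows essentially the same route as the paper: verify $u=\frac{x_0+x_1}{2}$ via $(nN_4)$ and the slot manipulations of the preceding lemma, then parametrize a competing $2$-collinear point $v$ by a scalar $s$, extract $|s|$ and $|1-s|$ into the $t$-argument, and invoke strict convexity to force $|s|=|1-s|=\frac12$. You are in fact somewhat more careful than the paper, which silently assumes $s\neq 0,1$ when dividing by $|s|$ and $|1-s|$ and applies the strict-convexity definition without arranging linear independence of the tuple; your choice of $x_2,\dots,x_n$ and the $(nN_2)$ argument close those small gaps (the only quibble being a sign slip in the phrase ``adding the first slot to the second,'' where $\alpha=-1$ plus part (i) is what is actually needed).
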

\begin{proof}
Since $u= \frac{x_0 + x_1}{2}$, we can write
\begin{align*}
x_0 - u&= x_0 - \frac{x_0 + x_1}{2}= \frac{x_0}{2}- \frac{x_1}{2}=
\frac{x_0 + x_1 - x_1}{2}- \frac{x_1}{2}\\&= -(x_1 - \frac{x_0 +
x_1}{2})= -(x_1 - u).  \hspace{6.5cm}
\end{align*}
Thus we conclude by the Definition $2.7$ that $u$, $x_0$ and $x_1$
are 2--colinear.\\
By using Lemma $3.1$, we can see that
\begin{align*}
N&(x_1 - u, x_1 - x_n, ..., x_{n-1} - x_n, t)\\&= N(x_1 - \frac{x_0
+ x_1}{2}, x_1 - x_n, ..., x_{n-1} - x_n, t)\\&= N(x_1 - x_0, x_1 -
x_n, ..., x_{n-1} - x_n, 2t)\\&= N(x_0 - x_n, x_1 - x_n, ...,
x_{n-1} - x_n, 2t),  \hspace{6.5cm}
\end{align*}
and similarly
\begin{align*}
N&(x_0 - x_n, x_0 - u, x_2 - x_n, ..., x_{n-1} - x_n, t)\\&= N(x_0 -
x_n, x_1 - x_n, ..., x_{n-1} - x_n, 2t) .  \hspace{6.5cm}
\end{align*}
Now, we prove the uniqueness of $u$.\\
Assume that $v\in X$, satisfies the above properties. Since $v$,
$x_0$ and $x_1$ are 2--colinear, there exists a real number $s$ such
that $v:= sx_0 + (1-s)x_1$. In view of Lemma $3.1$ and Definition
$2.5$, we obtain
\begin{align*}
N&(x_0 - x_n, x_1 - x_n, ..., x_{n-1} - x_n, 2t)\\&= N(x_1 - v, x_1
- x_n, x_2 - x_n, ..., x_{n-1} - x_n, t)\\&= N(x_1 - (sx_0 +
(1-s)x_1), x_1 - x_n, ..., x_{n-1} - x_n, t)\\&= N(x_1 - x_0, x_1 -
x_n, ..., x_{n-1} - x_n, \frac{t}{|s|})\\&= N(x_0 - x_n, x_1 - x_n,
..., x_{n-1} - x_n, \frac{t}{|s|}).  \hspace{6.5cm}
\end{align*}
So, $2t= \frac{t}{|s|}$. Since $t>0$, $|s|= \frac{1}{2}.$ Also
\begin{align*}
N&(x_0 - x_n, x_1 - x_n, ..., x_{n-1} - x_n, 2t)\\&= N(x_0 - x_n,
x_0 - v, x_2 - x_n, ..., x_{n-1} - x_n, t)\\&= N(x_0 - x_n, x_0 -
(sx_0 + (1-s)x_1), x_2 - x_n, ..., x_{n-1} - x_n, t)\\&= N(x_0 -
x_n, x_0 - x_1, x_2 - x_n, ..., x_{n-1} - x_n, \frac{t}{|1-s|})\\&=
N(x_0 - x_n, x_1 - x_n, ..., x_{n-1} - x_n, \frac{t}{|1-s|}).
\hspace{6.5cm}
\end{align*}
So $2t= \frac{t}{|1-s|}$. Hence $\frac{1}{2}= |s|= |1-s|$ and so
$s=\frac{1}{2}$. Thus we obtain that $u= v$ and this complete the
proof.
\end{proof}
\begin{lem}
Let $f:(X,N) \to (Y,N)$ is a fuzzy n-isometry;\\
$(i)~~$ For every $x_0 , x_1 , x_2\in X$, if $x_0$, $x_1$ and $x_2$
are 2--colinear, then $f(x_0)$, $f(x_1)$ and $f(x_2)$ are
2--colinear.\\
$(ii)~~$ If $f(0)= 0$, then for every $z_1 , ..., z_n\in X$ and
$t>0$
$$N(z_1 , ..., z_n , t)= N(f(z_1) , ..., f(z_n) , t)$$
\end{lem}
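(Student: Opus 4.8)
The plan is to treat the two assertions separately: part $(ii)$ is essentially a specialization of the isometry hypothesis, while part $(i)$ carries all the content and is where the injectivity clause of strict convexity (Definition $2.5$) must be used in an essential way. I would argue part $(i)$ directly, transporting the collinearity relation through $f$, rather than going through the midpoint Lemma $3.2$.

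For $(ii)$, the idea is to specialize the defining identity of a fuzzy $n$-isometry (Definition $2.6$) to the base point $x_0=0$. For arbitrary $z_1,\dots,z_n\in X$ and $t>0$ this reads
$$N(z_1,\dots,z_n,t)=N(z_1-0,\dots,z_n-0,t)=N\big(f(z_1)-f(0),\dots,f(z_n)-f(0),t\big),$$
and since $f(0)=0$ the right side collapses to $N(f(z_1),\dots,f(z_n),t)$. Nothing beyond the hypothesis $f(0)=0$ is needed, so I expect no difficulty here.

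For $(i)$, assume $x_0,x_1,x_2$ are $2$-collinear, so by Definition $2.7$ there is $\tau\in\mathbb{R}$ with $x_1-x_2=\tau(x_0-x_2)$; note $\tau\neq0$ since the points are distinct. First I would fix arbitrary auxiliary points $y_2,\dots,y_n\in X$ and write the isometry identity twice, once for the system $\{x_2,x_1,y_2,\dots,y_n\}$ and once for $\{x_2,x_0,y_2,\dots,y_n\}$, both based at $x_2$, so that each carries the same tail $f(y_2)-f(x_2),\dots,f(y_n)-f(x_2)$. In the first identity I would substitute $x_1-x_2=\tau(x_0-x_2)$ and pull the scalar $\tau$ out of the first slot using $(nN_4)$ together with the permutation invariance $(nN_3)$, turning its left side into $N(x_0-x_2,y_2-x_2,\dots,t/|\tau|)$; feeding this through the second isometry identity and reinserting $\tau$ on the $Y$ side yields
$$N\big(f(x_1)-f(x_2),\,f(y_2)-f(x_2),\dots,t\big)=N\big(\tau(f(x_0)-f(x_2)),\,f(y_2)-f(x_2),\dots,t\big).$$
Now the two $n$-tuples agree in their last $n-1$ entries and in the value of $t$, so the injectivity clause of strict convexity (the second requirement in Definition $2.5$, applied in $Y$) forces equality of the first entries, i.e. $f(x_1)-f(x_2)=\tau(f(x_0)-f(x_2))$. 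This is exactly $2$-collinearity of $f(x_0),f(x_1),f(x_2)$, and it transports the ratio $\tau$ unchanged, so no iteration over dyadic ratios or continuity argument is required.

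The main obstacle I anticipate is not the algebra but justifying the final application of Definition $2.5$ cleanly: I must ensure both sides are put into the exact form $N(\text{vector},\,\text{fixed tail},\,t)$ with identical tails and identical $t$ before invoking injectivity, which is why I insist on fixing $y_2,\dots,y_n$ once and on using $(nN_3)$ and $(nN_4)$ only to move the scalar in and out of a single slot. A secondary point is that $f(x_0),f(x_1),f(x_2)$ must be genuinely distinct, as Definition $2.7$ demands; this follows from injectivity of $f$, which I would deduce from the isometry identity together with $(nN_2)$ (if $f(a)=f(b)$ with $a\neq b$, choosing $y_2,\dots,y_n$ so that $a-b,y_2-b,\dots,y_n-b$ are linearly independent gives a value $<1$ on the $X$ side but $N(0,\dots)=1$ on the $Y$ side, a contradiction). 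Finally, I note that one could instead route $(i)$ through Lemma $3.2$ by showing $f$ preserves midpoints, but that approach is circular unless one again appeals to the injectivity clause of Definition $2.5$, so the direct argument above seems preferable.
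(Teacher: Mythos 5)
Your proposal is correct and follows essentially the same route as the paper: part $(ii)$ is the isometry identity specialized at $0$, and part $(i)$ is the same chain (isometry down to $X$, extract the collinearity scalar via $(nN_4)$, isometry back up to $Y$, reinsert the scalar, then invoke the uniqueness clause of Definition $2.5$), differing only in the cosmetic choice of base point ($x_2$ versus the paper's $x_0$). Your added remarks on $\tau\neq 0$ and on the injectivity of $f$ address details the paper leaves implicit, but they do not change the argument.
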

\begin{proof}
Since $x_0$, $x_1$ and $x_2$ are 2--colinear, there exists a real
number $s$ such that $x_1 - x_0= s(x_2 - x_0)$.So, for each $x_3 ,
..., x_{n+1}\in X$ we have
\begin{align*}
N&(f(x_1) - f(x_0), f(x_3) - f(x_0), ..., f(x_{n+1}) - f(x_0),
t)\\&= N(x_1 - x_0, x_3 - x_0, ..., x_{n+1} - x_0, t)\\&= N(x_2 -
x_0, x_3 - x_0, ..., x_{n+1} - x_0, \frac{t}{|s|})\\&= N(f(x_2) -
f(x_0), f(x_3) - f(x_0), ..., f(x_{n+1}) - f(x_0),
\frac{t}{|s|})\\&= N(s(f(x_2) - f(x_0)), f(x_3) - f(x_0), ...,
f(x_{n+1}) - f(x_0), t),\hspace{6.5cm}
\end{align*}
and by definition $2.5$, we conclude that $f(x_1) - f(x_0)= s(f(x_2)
- f(x_0))$.\\
To prove the property $(ii)$, we can write
\begin{align*}
N(z_1 , ..., z_n , t)&= N(z_1 - 0 , ..., z_n - 0 , t)\\&= N(f(z_1) -
f(0), ..., f(z_n) - f(0), t)\\&= N(f(z_1) , ..., f(z_n) , t).
\hspace{7.5cm}
\end{align*}
\end{proof}
\begin{thm}
 Every fuzzy n-isometry $f: (X, N) \to (Y, N)$ is
affine.
\end{thm}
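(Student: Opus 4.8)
The plan is to reduce to the origin-preserving case and then show that $f$ carries midpoints to midpoints; strict convexity (the rigidity behind Lemma 3.2) will play the role that surjectivity plays in the classical Mazur--Ulam theorem. First I would set $g := f - f(0)$. Since the fuzzy $n$-isometry condition involves only the differences $f(x_i) - f(x_0)$, which are unchanged by subtracting the constant $f(0)$, the map $g$ is again a fuzzy $n$-isometry and now satisfies $g(0) = 0$; by Lemma 3.3(ii) it preserves the fuzzy $n$-norm. Because $f$ is affine exactly when $g$ is additive, it suffices to prove $g\bigl(\tfrac{x_0+x_1}{2}\bigr) = \tfrac{g(x_0)+g(x_1)}{2}$ for all $x_0,x_1$: taking $x_1=0$ gives $g(\tfrac{x}{2}) = \tfrac12 g(x)$, and then $g(x_0+x_1) = 2g(\tfrac{x_0+x_1}{2}) = g(x_0)+g(x_1)$, i.e. additivity, whence $f = g + f(0)$ is affine.

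Next, fix $x_0,x_1$ (the case $x_0=x_1$ being trivial) and put $u := \tfrac{x_0+x_1}{2}$, so that $u$, $x_0$, $x_1$ are $2$-collinear. By Lemma 3.3(i) the images $g(u)$, $g(x_0)$, $g(x_1)$ are $2$-collinear as well, so I may write $g(u) = \sigma g(x_0) + (1-\sigma)g(x_1)$ for some real $\sigma$, and the entire problem collapses to showing $\sigma = \tfrac12$. I would choose auxiliary points $x_2,\dots,x_n$ so that $x_1-x_0, x_2-x_0,\dots,x_n-x_0$ are linearly independent (possible since $\dim X \ge n$); this keeps the scalar $N(x_1-x_0,\dots,x_n-x_0,\cdot)$ from being identically $1$, so strict convexity can genuinely pin it down in its last variable.

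The key step is to feed $u$ into the isometry based at $x_0$. Applying the defining identity of a fuzzy $n$-isometry with base point $x_0$ to the points $u, x_2,\dots,x_n$, and using $u - x_0 = \tfrac12(x_1-x_0)$ with $(nN_4)$ on the left together with $g(u) - g(x_0) = (1-\sigma)\bigl(g(x_1)-g(x_0)\bigr)$, $(nN_4)$ and the isometry on the right, I obtain $N(x_1-x_0,x_2-x_0,\dots,x_n-x_0,2t) = N(x_1-x_0,x_2-x_0,\dots,x_n-x_0,\tfrac{t}{|1-\sigma|})$. The two sides share identical vector arguments, so the injectivity-in-$t$ clause of strict convexity (Definition 2.5) forces $2t = \tfrac{t}{|1-\sigma|}$, i.e. $|1-\sigma| = \tfrac12$. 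Repeating the computation with base point $x_1$ (now $u - x_1 = \tfrac12(x_0-x_1)$ and $g(u)-g(x_1)=\sigma(g(x_0)-g(x_1))$) gives $|\sigma| = \tfrac12$. The only real number with $|\sigma| = |1-\sigma| = \tfrac12$ is $\sigma = \tfrac12$, so $g(u) = \tfrac{g(x_0)+g(x_1)}{2}$, the midpoint identity needed above. (Equivalently, one can phrase this as: $g(u)$ is $2$-collinear with $g(x_0),g(x_1)$ and meets the norm conditions of Lemma 3.2, so it must be the unique such point, namely the midpoint.)

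I expect the main obstacle to be exactly this transfer of the midpoint characterization across $g$. The difficulty is that $g(u)$ is not the $g$-image of any difference the isometry directly controls, so one cannot simply quote the isometry on an expression containing $g(u)$; the device that unlocks it is to use $2$-collinearity to write $g(u)$ as an affine combination of $g(x_0),g(x_1)$, turning every occurrence of $g(u)$ into a scalar multiple of a controlled difference, after which $(nN_4)$ extracts that scalar and strict convexity reads it off. This is also precisely where strict convexity is indispensable and where, in contrast to the classical theorem, no surjectivity of $f$ is required.
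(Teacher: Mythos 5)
Your reduction to $g := f - f(0)$ and your midpoint argument are sound and essentially coincide with the paper's route: the paper verifies that $f\bigl(\tfrac{x_0+x_1}{2}\bigr)$ satisfies the two norm identities of Lemma 3.2 and is $2$-collinear with $f(x_0),f(x_1)$, then invokes the uniqueness clause of that lemma, whereas you inline the same computation (write $g(u)$ as an affine combination $\sigma g(x_0)+(1-\sigma)g(x_1)$, extract $|1-\sigma|=|\sigma|=\tfrac12$ via $(nN_4)$ and strict convexity). Your explicit choice of $x_2,\dots,x_n$ making the vector arguments linearly independent is actually a point of care the paper omits. Up to here the two arguments are the same in substance.

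However, there is a genuine gap at the end. From the midpoint identity you correctly deduce additivity of $g$, i.e. $\mathbb{Q}$-linearity, and then declare $f$ affine. But ``affine'' here means $g$ is $\mathbb{R}$-linear, and an additive map need not be $\mathbb{R}$-homogeneous (Hamel-basis constructions give additive, non-linear maps); in the classical Mazur--Ulam setting one closes this gap by observing that an isometry is continuous, but no continuity of a fuzzy $n$-isometry is available or invoked here. The paper devotes the entire second half of its proof to exactly this step: for $r>0$ it uses Lemma 3.3(i) to get $f(rx)=sf(x)$, uses norm preservation (Lemma 3.3(ii)) and strict convexity to force $s=\pm r$, and then rules out $s=-r$ by squeezing $r$ between rationals $q_1<r<q_2$, computing $N\bigl(f(x),f(z_1)-f(q_2x),\dots,\tfrac{t}{q_2+r}\bigr)$ two ways, and deriving the contradiction $q_2+r\le q_2-q_1$ from the monotonicity axiom $(nN_6)$. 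Without this (or some substitute argument for $\mathbb{R}$-homogeneity), your proof establishes only that $f$ is $\mathbb{Q}$-affine, which is strictly weaker than the stated theorem.
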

\begin{proof}
$f: (X, N) \to (Y, N)$ is affine, if the function $g: (X, N) \to (Y,
N)$ defined by $g(x)= f(x)- f(0)$, is linear. Its obvious that $g$
is an n-isometry and $g(0)= 0$. Thus, we may assume that $f(0)= 0$.
Hence, it is enough to show that $f$ is linear.\\
Let $x_0, x_1\in X$. By Lemma $3.1$, for every $x_2 , ..., x_n\in X$
we have
\begin{align*}
N&(f(x_0) - f(x_n), f(x_0) - f(\frac{x_0 + x_1}{2}) , f(x_2) -
f(x_n), ..., f(x_{n-1}) - f(x_n), t)\\&= N(f(x_n) - f(x_0),
f(\frac{x_0 + x_1}{2}) - f(x_0), f(x_2) - f(x_0), ..., f(x_{n-1}) -
f(x_0), t)\\&= N(x_n - x_0, \frac{x_0 + x_1}{2}- x_0, x_2 - x_0,
..., x_{n-1} - x_0, t)\\&= N(x_n - x_0, x_1 - x_0, x_2 - x_0, ...,
x_{n-1} - x_0, 2t)\\&= N(f(x_n) - f(x_0), f(x_1) - f(x_0), f(x_2) -
f(x_0), ..., f(x_{n-1}) - f(x_0), 2t)\\&= N(f(x_0) - f(x_n), f(x_1)
- f(x_n), f(x_2) - f(x_n), ..., f(x_{n-1}) - f(x_n), 2t).
\hspace{6.5cm}
\end{align*}
And we can obtain
\begin{align*}
N&(f(x_1) - f(\frac{x_0 + x_1}{2}) , f(x_1) - f(x_n), f(x_2) -
f(x_n)..., f(x_{n-1}) - f(x_n), t)\\&= N(f(\frac{x_0 + x_1}{2}) -
f(x_1) , f(x_n) - f(x_1), f(x_2) - f(x_1), ..., f(x_{n-1}) - f(x_1),
t)\\&= N(\frac{x_0 + x_1}{2} - x_1 , x_n - x_1, x_2 - x_1, ...,
x_{n-1} - x_1, t)\\&= N(x_0 - x_1 , x_n - x_1, x_2 - x_1, ...,
x_{n-1} - x_1, 2t)\\&= N(f(x_0) - f(x_1) , f(x_n) - f(x_1), f(x_2) -
f(x_1), ..., f(x_{n-1}) - f(x_1), 2t)\\&= N(f(x_0) - f(x_n) , f(x_1)
- f(x_n), f(x_2) - f(x_n), ..., f(x_{n-1}) - f(x_n), 2t).
\hspace{6.5cm}
\end{align*}
By $(i)$ of Lemma $(3.3)$, we obtain that $f(\frac{x_0 + x_1}{2})$,
$f(x_0)$ and $f(x_1)$ are 2--colinear. Now, from Lemma $3.2$, we
have
\begin{align*}
f(\frac{x_0 + x_1}{2})= \frac{f(x_0)}{2} + \frac{f(x_1)}{2}
\hspace{7.5cm}
\end{align*}
for all $x_0 , x_1\in X$. It follows that $f$ is $\Bbb
Q$-linear($\Bbb Q$ is the set of rational numbers). We
have to show that $f$ is $\Bbb R$-linear.\\
Let $r\in \Bbb R^{+}$ and $x\in X$. By $(i)$ of Lemma $(3.3)$,
$f(0)$, $f(x)$ and $f(rx)$ are 2--colinear. Since $f(0)= 0$,there
exists $s\in \Bbb R$ such that $f(rx)= sf(x)$. From $(ii)$ of Lemma
$(3.3)$, for every $x_1 , ..., x_{n-1}$ and $t>0$, we have
\begin{align*}
N(x , x_1 , x_2 , ..., x_{n-1} , \frac{t}{r})&= N(rx , x_1 , ...,
x_{n-1} , t)\\&= N(f(rx) , f(x_1) , f(x_2) , ..., f(x_{n-1}) ,
t)\\&= N(sf(x) , f(x_1) , f(x_2) , ..., f(x_{n-1}) , t)\\&= N(f(x) ,
f(x_1) , f(x_2) , ..., f(x_{n-1}) , \frac{t}{|s|})\\&= N(x , x_1 ,
x_2 , ..., x_{n-1} , \frac{t}{|s|}).  \hspace{6.5cm}
\end{align*}
Hence $s= \pm r$. The proof is completed if $s= r$. If $s= -r$, that
is, $f(rx)= -rf(x)$. Then there exists $q_1 , q_2\in \Bbb Q$ such
that $0 < q_1 < r < q_2$. For each $z_1 , ..., z_n\in X$, we have
\begin{align*}
N&(f(x) , f(z_1) - f(q_2 x) , ..., f(z_{n-1}) - f(q_2 x) ,
\frac{t}{q_2 + r})\\&= N(q_2 f(x) - (-rf(x)) , f(z_1) - f(q_2 x) ,
..., f(z_{n-1}) - f(q_2 x) , t)\\&= N(f(rx) - f(q_2 x) , f(z_1) -
f(q_2 x) , ..., f(z_{n-1}) - f(q_2 x) , t)\\&= N(rx - q_2 x , z_1 -
q_2 x , ..., z_{n-1} - q_2 x , t)\\&= N(x , z_1 - q_2 x , ...,
z_{n-1} - q_2 x , \frac{t}{q_2 - r})\\& \geq N(x , z_1 - q_2 x ,
..., z_{n-1} - q_2 x , \frac{t}{q_2 - q_1})\\&= N(q_1 x - q_2 x ,
z_1 - q_2 x , ..., z_{n-1} - q_2 x , t)\\&= N(f(q_1 x) - f(q_2 x) ,
f(z_1) - f(q_2 x) , ..., f(z_{n-1}) - f(q_2 x) , t)\\&= N(f(x) ,
f(z_1) - f(q_2 x) , ..., f(z_{n-1}) - f(q_2 x) , \frac{t}{q_2 -
q_1}).   \hspace{6.5cm}
\end{align*}
By $(nN_6)$, we have $q_2 + r\leq q_2 - q_1$ which is a
contradiction. Hence $s= r$, that is, $f(rx)= rf(x)$ for all
positive real numbers $r$. Therefore $f$ is $\Bbb R$-linear, as
desired.
\end{proof}

{\small


}
\end{document}